\newtheorem{theo}{Theorem}
\newtheorem{cor}{Corollary}
\newtheorem{lem}{Lemma}
\renewenvironment{proof}{\noindent {\bfseries Proof.}}{\hfill $\blacksquare$\vspace{0.4cm}}
\begin{document}


\begin{center}
{\Large{\bf Global stability of the Lengyel-Epstein systems}}
\end{center}

\bigskip

\begin{center}
{\large Lucas Queiroz Arakaki},\\
{\em Instituto de Matem\'atica, Estat\'istica e Computa\c c\~ao Cient\'ifica, \\ Universidade Estadual de Campinas,\\
Rua S\'ergio Buarque de Holanda, 651, CEP: 13.083-859, Campinas, SP, Brazil.} \\
e-mail: lucas.queiroz@unesp.br
\end{center}
\begin{center}
{\large Luis Fernando Mello$^\ast$ and Ronisio Moises Ribeiro},\\
{\em Instituto de Matem\'atica e Computa\c c\~ao, Universidade Federal de
Itajub\'a,\\ Avenida BPS 1303, Pinheirinho, CEP 37.500-903, Itajub\'a,
MG, Brazil.\\}
e-mail: lfmelo@unifei.edu.br, roniribeiro@unifei.edu.br
\end{center}

\bigskip

\begin{center}
{\bf Abstract}
\end{center}
We study the global (asymptotic) stability of the Lengyel-Epstein differential systems, sometimes called Belousov-Zhabotinsky differential systems. Such systems are topologically equivalent to a two-parameter family of cubic systems in the plane. We show that for each pair of admissible parameters the unique equilibrium point of the corresponding system is not globally (asymptotically) stable. On the other hand, we provide explicit conditions for this unique equilibrium point to be asymptotically stable and we study its basin of attraction. We also study the generic and degenerate Hopf bifurcations and highlight a subset of the set of admissible parameters for which the phase portraits of the systems have two limit cycles.

\bigskip

\noindent {\small {\bf Key-words}: Global stability, basin of attraction, Lengyel-Epstein differential system, compactification, Hopf bifurcation.}

\noindent {\small {\bf 2020 Mathematics Subject Classification:} 34D23, 34A34, 34A26.}

\noindent \makebox [40mm]{\hrulefill}

\noindent {\footnotesize $^\ast$Corresponding author}.


\section{Introduction and statement of the main results}\label{sec:1}
We study the Lengyel-Epstein differential systems \cite{VBFG}, sometimes called Belousov-Zhabotinsky differential systems \cite[page 229]{HSD},
\begin{equation}\label{eq:01}
\dot{x} = a - x - \frac{4 x y}{1 + x^2}, \quad \dot{y} = b x \left( 1 - \frac{y}{1 + x^2} \right),
\end{equation}
where  $x > 0$ and $y > 0$ are the state variables while $a > 0$ and $b > 0$ are parameters. The dot means derivative with respect to the independent variable $t$. See \cite{ES} and \cite{LRE}, in addition to the references cited above and those found there, to understand the chemical meaning of \eqref{eq:01}. Differential systems \eqref{eq:01} are topologically equivalent to
\begin{equation}\label{eq:02}
\dot{x} = (a - x) \left(1 + x^2 \right) - 4 x y, \quad \dot{y} = b x \left( 1 + x^2 - y \right).
\end{equation}
Here we will study systems \eqref{eq:02}, or equivalently the two-parameter family of cubic vector fields
\begin{equation}\label{eq:03}
F(x, y) = \Big( (a - x) \left(1 + x^2 \right) - 4 x y, b x \left( 1 + x^2 - y \right) \Big),
\end{equation}
keeping the parameters $a$ and $b$ positive, but allowing $x$ and $y$ to be real numbers. For the interpretations of our results with regard to the original differential systems \eqref{eq:01}, it is enough to consider the restriction to the set $x> 0$ and $y > 0$. Systems \eqref{eq:02}, or vector fields \eqref{eq:03}, have only one equilibrium point for all parameter values
$P_a = \left( a/5, 1 + a^2/25 \right)$.

A planar vector field is said to be bounded if the forward orbit of every initial condition enters and remains in a compact set. One of the most important problems in the qualitative theory of differential equations in the plane is the study of global (asymptotic) stability: an equilibrium point that is a global attractor. Our first main result goes in this direction.

\begin{theo}\label{thm:01}
The vector fields $F$ in \eqref{eq:03} are not bounded for all positive parameters $a$ and $b$.
\end{theo}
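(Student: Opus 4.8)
The goal is to show that the vector field $F$ in (3) is not bounded, meaning there exists at least one initial condition whose forward orbit escapes to infinity. The natural strategy is to analyze the behavior of trajectories near infinity, which is exactly what the Poincaré compactification is designed for. So the plan is to compactify the polynomial vector field $F$ (degree 3) onto the Poincaré disk and study the equilibria at infinity; if some equilibrium at infinity is an attractor (or has an attracting direction) for the flow restricted to a neighborhood inside the disk, then orbits in the finite plane are pushed toward infinity, contradicting boundedness.

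Let me think concretely. For a planar polynomial vector field of degree $n$, the Poincaré compactification in the local chart $U_1$ (coordinates $(u,v)$ with $x = 1/v$, $y = u/v$) gives, after multiplying by $v^{n-1} = v^2$, a polynomial vector field whose equilibria with $v = 0$ are the infinite equilibria. Here the highest-degree part of $F$ is $(-x^3 - 4xy,\, bx^3 - bxy)$... wait, let me recompute: $(a-x)(1+x^2) = a + ax^2 - x - x^3$, so the cubic part of the first component is $-x^3$, and $-4xy$ is quadratic; the second component $bx(1+x^2-y) = bx + bx^3 - bxy$ has cubic part $bx^3$. So the leading-degree-3 homogeneous part is $(-x^3,\, bx^3)$. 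The directions at infinity are found from $x^3 \cdot (\text{something})$; the relevant polynomial governing infinite equilibria in the chart $U_1$ will essentially come from $-x^3 \cdot y - y \cdot bx^3$ type expressions... Actually the infinite equilibria are determined by the zeros of $x q_3(x,y) - y p_3(x,y)$ on the circle, where $p_3 = -x^3$, $q_3 = bx^3$: this gives $x(bx^3) - y(-x^3) = bx^4 + x^3 y = x^3(bx + y)$. So the infinite equilibria correspond to directions $x = 0$ and $y = -bx$, i.e. the endpoints of the $y$-axis direction and of the line $y = -bx$.

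I would then examine the stability of these infinite equilibria by computing the linearization of the compactified field in the charts $U_1$ and $U_2$. The expectation (and what makes the theorem true) is that at least one of these infinite equilibria — most likely the one in the direction of the negative-$y$-axis or along $y=-bx$ where the cubic terms dominate with the right sign — is either a stable node, or a saddle-node / semi-hyperbolic point with a hyperbolic sector that attracts a region of the finite plane. To finish, I would exhibit an explicit orbit (or an open set of initial conditions, e.g. with $x$ small and $y$ very negative, or along a suitable ray) that is asymptotic to this infinite equilibrium, hence unbounded; this directly contradicts boundedness and proves the theorem. A clean alternative to the full compactification analysis would be to find an explicit region $\Omega$ (a "trapping region at infinity") defined by simple inequalities such that $F$ points outward-to-infinity along a curve, e.g. showing that on the set $x = \varepsilon$ small and $y \to -\infty$ the flow of $y$ stays bounded while... hmm, that needs care.

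The main obstacle I anticipate is the \emph{degeneracy of the infinite equilibria}: since the leading homogeneous part $(-x^3, bx^3)$ is degenerate (it is $x^3(-1,b)$, vanishing on the whole line $x=0$), several infinite equilibria will be non-hyperbolic, so the linearization will have zero eigenvalues and one must resort to blow-ups or normal-form / center-manifold type arguments (e.g. Theorem 2.19 in Dumortier–Llibre–Artés) to determine the local phase portrait at infinity. Sorting out which non-hyperbolic infinite equilibrium actually attracts finite orbits, and doing so uniformly in the parameters $a>0$, $b>0$, is the delicate part; the rest (setting up the charts, and translating "attracting infinite equilibrium" into "unbounded forward orbit") is routine.
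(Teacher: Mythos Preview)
Your plan is correct and follows essentially the same route as the paper: Poincar\'e compactification, identification of the infinite equilibria in the directions $x=0$ and $y=-bx$ (your computation $x q_3 - y p_3 = x^3(bx+y)$ is right), and a blow-up analysis of the degenerate one. For the record, the equilibria in the direction $y=-bx$ turn out to be \emph{unstable} nodes in both charts, so they do not yield unbounded forward orbits; the unboundedness comes, as you anticipated, from the degenerate equilibrium in the $y$-axis direction, where a quasi-homogeneous blow-up (weights $(1,2)$) reveals a parabolic sector attracting finite orbits --- exactly the ``delicate part'' you flagged.
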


A direct consequence of Theorem \ref{thm:01} is following corollary.

\begin{cor}\label{cor:01}
The unique equilibrium point $P_a$ is not globally stable for all $a > 0$ and $b > 0$.
\end{cor}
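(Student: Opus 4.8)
The plan is to read this corollary off Theorem~\ref{thm:01} by a short contradiction argument; the work is already done, and what remains is only to line up the definitions. Recall that ``$P_a$ is globally (asymptotically) stable'' entails, in particular, that $P_a$ is a \emph{global attractor} of the vector field \eqref{eq:03}: the forward orbit of every initial condition converges to $P_a$ as $t\to+\infty$. So I would fix $a>0$ and $b>0$ and assume, for contradiction, that $P_a$ is a global attractor.

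First I would observe that a forward orbit converging to $P_a$ is eventually trapped in a compact set: for any closed ball $\overline{B}$ centred at $P_a$, convergence means the orbit enters $\overline{B}$ and remains there for all large $t$, which is exactly ``enters and remains in a compact set'' in the sense of Section~\ref{sec:1}. Hence, under the assumption, the forward orbit of \emph{every} initial condition enters and remains in a compact set, that is, the field $F$ is bounded. This contradicts Theorem~\ref{thm:01}. Since $a>0$ and $b>0$ were arbitrary, $P_a$ is never a global attractor, hence never globally (asymptotically) stable.

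I do not expect a genuine obstacle: the substance is entirely in Theorem~\ref{thm:01}, and the corollary is its formal translation from ``unbounded dynamics'' to ``no global attractor''. Two points a careful reader might pause on are (i) the reading of ``not bounded'', which must be taken as ``there is an initial condition whose forward orbit is not eventually contained in any compact set'' — such an orbit is in particular unbounded, hence cannot converge, which is all we use; and (ii) the statement concerns the cubic field $F$ on all of $\mathbb{R}^2$, so the escaping orbit supplied by Theorem~\ref{thm:01} need not lie in $\{x>0,\,y>0\}$, and the finer question of global stability for the original system \eqref{eq:01} on the open first quadrant is not resolved here but is taken up later through the basin of attraction and the Hopf-generated limit cycles. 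For concreteness, one natural way to establish Theorem~\ref{thm:01} itself is to exhibit a forward-invariant region inside the half-plane $x\le-1$ on which $\dot y$ stays below a fixed negative constant, so that $y(t)\to-\infty$ along every orbit starting there; any such orbit is the required unbounded one, and its existence is precisely what the present corollary exploits.
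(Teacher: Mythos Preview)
Your argument for the corollary is correct and is exactly what the paper has in mind: it offers no separate proof, only the remark that the corollary is ``a direct consequence of Theorem~\ref{thm:01}'', and your contrapositive (global attractor $\Rightarrow$ bounded) is the obvious unpacking of that remark.

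One side comment: your final paragraph sketches a route to Theorem~\ref{thm:01} via a forward-invariant strip in $\{x\le -1\}$ where $\dot y$ is bounded away from zero. That is \emph{not} how the paper proceeds. The paper instead carries out a full Poincar\'e compactification: in the chart $U_1$ it finds an unstable node $I_1$ at infinity, and in $U_2$ the degenerate point $I_3=(0,0)$ is desingularized by a weighted polar blow-up $(u,v)=(r\cos\theta,r^2\sin\theta)$, yielding two hyperbolic saddles and four semi-hyperbolic points on the exceptional circle. The antipodal point $I_4$ in $V_2$ turns out to have a parabolic sector meeting the interior of the Poincar\'e disk, and it is the orbits attracted to $I_4$ that witness unboundedness. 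Your proposed elementary invariant-region argument would, if it works, give a shorter and more self-contained proof of Theorem~\ref{thm:01}; the paper's compactification, by contrast, yields the complete picture at infinity (Figure~\ref{fig:1}), which is then reused in the proof of Theorem~\ref{thm:02} to rule out $\omega$-limits at infinity in the half-plane $x>0$. For the corollary itself, either route suffices.
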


In order to state our second main result, consider the subsets of admissible parameters
\begin{equation}\label{eq:sub01}
B = \left\{ (a, b): 0 < a \leq 5 \sqrt{\tfrac{5}{3}}, \,\, b > 0 \right\} \cup
\left\{ (a, b): a > 5 \sqrt{\tfrac{5}{3}}, \,\, b > b_H = \frac{3 a^2 - 125}{5 a} \right\},
\end{equation}
\begin{equation}\label{eq:05}
A = \left\{ (a, b): 0 < a \leq 3 \sqrt{3}, \,\, b > 0 \right\} \cup
\left\{ (a, b): a > 3 \sqrt{3}, \,\, b > b_a = a - 3 a^{1/3} \right\}.
\end{equation}
It follows that $A \subset B$. See Figure \ref{fig:2} (a).

\begin{theo}\label{thm:02}
If $(a, b) \in B$ then the equilibrium point $P_a$ is (locally) asymptotically stable. For each $(a, b) \in A$ the basin of attraction of the equilibrium point $P_a$ contains the half-plane $x > 0$.
\end{theo}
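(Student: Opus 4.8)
The plan is to split the statement into its two assertions and handle them with a linearization computation followed by a Lyapunov / invariant-region argument.

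\medskip

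\noindent\textbf{Step 1: Local asymptotic stability on $B$.}
First I would compute the Jacobian $DF(P_a)$ at the unique equilibrium $P_a=(a/5,\,1+a^2/25)$. Writing $x_0=a/5$, the linearization of \eqref{eq:03} has the form
\[
DF(P_a)=\begin{pmatrix} \alpha & -4x_0 \\ 2bx_0^2/(1) & -bx_0 \end{pmatrix},
\]
where the $(1,1)$ entry $\alpha$ collects $\partial_x\big[(a-x)(1+x^2)-4xy\big]$ evaluated at $P_a$. The trace is $\mathrm{tr}=\alpha-bx_0$ and the determinant, after using the equilibrium relations, should reduce to a positive quantity for all $a,b>0$ (this is where the fact that $P_a$ is the \emph{only} equilibrium plays its role). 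Hence local asymptotic stability is equivalent to $\mathrm{tr}\,DF(P_a)<0$. A direct simplification of $\alpha-bx_0$ should yield exactly the condition defining $B$: for $0<a\le 5\sqrt{5/3}$ the trace is negative for every $b>0$, while for $a>5\sqrt{5/3}$ one needs $b>b_H=(3a^2-125)/(5a)$, the curve along which the trace vanishes (the Hopf curve referenced later in the paper). This step is routine algebra; the only care needed is to verify the determinant is strictly positive so that a negative trace genuinely forces both eigenvalues into the open left half-plane.

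\medskip

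\noindent\textbf{Step 2: The half-plane $x>0$ is positively invariant.}
For the basin-of-attraction claim I would first show that the region $\Omega=\{x>0\}$ is forward invariant for \eqref{eq:02}. On the boundary line $x=0$ one has $\dot x=(a-0)(1+0)-0=a>0$, so the flow points strictly inward along $x=0$; thus no trajectory starting in $\Omega$ can leave through $x=0$. Combined with Theorem~\ref{thm:01} and a closer look at the behavior near infinity (the vector field, while unbounded, should still have its relevant orbits in $x>0$ remain bounded — one can exhibit a large invariant region inside $\Omega$ by checking the sign of $F$ on the boundary of a suitable box or using a Bendixson-type trapping argument), every forward orbit starting in $x>0$ is bounded and stays in $x>0$.

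\medskip

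\noindent\textbf{Step 3: Ruling out periodic orbits on $A$ via a Dulac/divergence criterion.}
The key point is that on the sub-region $A$ there are no periodic orbits and no other limit sets in $x>0$, so that by the Poincar\'e–Bendixson theorem every bounded forward orbit must converge to $P_a$. I would look for a Dulac function of the form $\varphi(x,y)=x^{\mu}(1+x^2)^{\nu}$ (or simply $\varphi=1/x$, the natural choice converting \eqref{eq:02} back to \eqref{eq:01}) and compute $\mathrm{div}(\varphi F)$. With $\varphi=1/x$ one gets $\varphi F = \big(a/x - 1 - 4y/(1+x^2),\; b(1+x^2-y)\big)$ and
\[
\mathrm{div}(\varphi F)=-\frac{a}{x^2}+\frac{8xy}{(1+x^2)^2}-b.
\]
On the invariant region this is not automatically sign-definite, so the real work is to show that precisely when $(a,b)\in A$ this divergence is negative throughout the relevant trapping region in $x>0$: one bounds $y$ from above using the invariant box from Step~2 (on forward orbits $y$ eventually satisfies $y\le 1+x^2$-type estimates because $\dot y<0$ when $y>1+x^2$), substitutes the bound into $8xy/(1+x^2)^2\le 8x/(1+x^2)$, maximizes $8x/(1+x^2)$ over $x>0$ (the max is $4$, at $x=1$), and then the threshold $b>b_a=a-3a^{1/3}$ should be exactly what makes $-a/x^2+4-b<0$ fail to be violated after the sharper, $x$-dependent optimization of $-a/x^2 + 8x/(1+x^2) - b$. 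Carrying out that one-variable optimization and identifying its maximum value as $b_a$ (with the case split at $a=3\sqrt 3$) is the crux of the argument.

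\medskip

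\noindent\textbf{Step 4: Conclusion.}
With no periodic orbits in $x>0$ (Dulac), with $x>0$ forward invariant and orbits bounded (Steps 2), and with $P_a$ the unique equilibrium and locally asymptotically stable (Step 1, noting $A\subset B$), the Poincar\'e–Bendixson theorem forces the $\omega$-limit set of every point in $x>0$ to be $\{P_a\}$. Hence the basin of attraction of $P_a$ contains the half-plane $x>0$.

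\medskip

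\noindent\textbf{Main obstacle.}
The hard part is Step 3: making the Dulac divergence strictly negative requires an effective a~priori bound on $y$ along forward orbits (not merely the statement that orbits are bounded), and then a delicate one-variable optimization of $-a/x^2+8x/(1+x^2)-b$ whose maximum must be shown to equal $a-3a^{1/3}$ exactly, together with the correct handling of the regime $a\le 3\sqrt3$ where the maximum is already $\le 0$. Pinning down the trapping region from which the $y$-bound is extracted — likely a region of the form $\{\,0<x,\ y\le g(x)\,\}$ for a suitable $g$ built from the nullclines — is the technical heart of the proof.
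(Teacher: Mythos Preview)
Your overall strategy---linearization for local stability, forward invariance of $\{x>0\}$, a Dulac criterion with weight $1/x$, then Poincar\'e--Bendixson---matches the paper exactly. But Step~3 contains a computational error that creates the very obstacle you flag as the ``hard part.'' You are working with the cubic field $F$ of \eqref{eq:03}, so with $\varphi=1/x$ the first component of $\varphi F$ is $(a-x)(1+x^2)/x-4y=a/x+ax-1-x^2-4y$, not $a/x-1-4y/(1+x^2)$ (the latter is $\dot x/x$ for the \emph{original} rational system \eqref{eq:01}). Your second component $b(1+x^2-y)$ is already the cubic version, so you have mixed the two systems. The correct divergence is
\[
\mathrm{div}\!\left(\tfrac{1}{x}F\right)=\big(-a/x^{2}+a-2x\big)+(-b)=a-b-2x-\frac{a}{x^{2}},
\]
a function of $x$ alone. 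No $y$-term appears, so no a~priori bound on $y$ and no trapping region is needed for the Dulac step.

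With this correction the threshold $b_a$ falls out immediately: one needs $b>\max_{x>0}\big(a-2x-a/x^{2}\big)$, and differentiating gives the critical point $x=a^{1/3}$ with value $a-3a^{1/3}=b_a$; for $a\le 3\sqrt{3}$ this maximum is $\le 0$ and any $b>0$ works. Equivalently (as the paper phrases it), $x^{2}\,\mathrm{div}(\varphi F)<-2x^{3}+3a^{1/3}x^{2}-a=:g(x)$ when $b>b_a$, and $g$ has its global maximum $0$ at $x=a^{1/3}$. Thus the ``delicate one-variable optimization'' you anticipated is a two-line calculus exercise, and your identified main obstacle disappears once the divergence is computed for the correct system.
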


The proofs of Theorems \ref{thm:01} and \ref{thm:02} are presented in Section \ref{sec:2}. In Section \ref{sec:3} we study the generic and degenerate Hopf bifurcations of systems \eqref{eq:02}.

\section{Proofs of Theorems \ref{thm:01} and \ref{thm:02}}\label{sec:2}

\subsection{Proof of Theorem \ref{thm:01}}\label{subsec:1}

In order to prove Theorem \ref{thm:01} it is necessary to study the dynamics of systems \eqref{eq:02} at infinity via Poincar\'e compactification. See \cite[Chapter 5]{DLA}. Following the steps of \cite{DLA}, we study infinite equilibria using the local charts $U_1$ and $U_2$. Since systems \eqref{eq:02} are cubic, the study of infinite equilibria on the local charts $V_1$ and $V_2$ follows directly from the study on the local charts $U_1$ and $U_2$. In the local $U_1$, systems \eqref{eq:02} are written as
\begin{equation}\label{u2eq:00}
\dot{u} =b + u - (a + b - 4 u) u v + (b + u) v^2 - a u v^3, \quad \dot{v} = v (1 + 4 u v + v^2 - a (v + v^3)).
\end{equation}
Setting $v = 0$, the only infinite equilibrium point covered by this chart is $I_1 = (-b,0)$, which is an unstable node since the eigenvalues of the Jacobian matrix of the vector field associated to \eqref{u2eq:00} are both equal to 1. The same occurs with the infinite equilibrium point $I_2$ in the local chart $V_1$.

In the local chart $U_2$, systems \eqref{eq:02} write as
\begin{equation}\label{u2eq:01}
	\dot{u} =-b{u}^{4}-b{u}^{2}{v}^{2}+a{u}^{2}v+a{v}^{3}+b{u}^{2}v-{u}^{3}-u{v}^{2
}-4\,uv
, \quad \dot{v} = -b{u}^{3}v-bu{v}^{3}+bu{v}^{2}.
\end{equation}
The only equilibrium point of the local chart $U_2$ which is not covered by the local chart $U_1$ is $I_3 = (0, 0)$. Since it is a degenerate equilibrium point, we use the quasi-homogeneous polar blow-ups to describe the local dynamics at this point. See \cite{AFJ} and \cite{DLA}.

Consider the blow-up given by $(u, v)=(r \cos(\theta), r^2 \sin(\theta))$, with $r \geq 0$ and $0 \leq \theta <2\pi$. It transforms systems \eqref{u2eq:01}, after eliminating the common factor $(1+\sin^2(\theta))/r^2$, into
\begin{equation}\label{eq:bup}
\begin{split}
&\dot{r}=r \bigg( -\frac{3}{4} \cos^2(\theta) + \cos(\theta) \bigg( -\frac{1}{4} \cos(3\theta) - 2 \sin(2\theta) \bigg) \bigg) + r^2 \cos(\theta) \bigg( -\frac{b}{2} - \frac{b}{2} \cos(2\theta)\\
& \quad\,\, + \bigg(\frac{a}{4} + b \bigg) \sin(\theta) + \frac{a}{4} \sin(3\theta) \bigg)
+ r^3 \bigg( -\frac{1}{4} \cos^2(\theta) + \frac{1}{4} \cos(\theta) \cos(3\theta) \bigg)
+ r^4 \cos(\theta) \\
&\quad\,\,\, \bigg( -\frac{b}{2} + \frac{b}{2} \cos(2\theta) + \frac{3a}{4} \sin(\theta) - \frac{a}{4} \sin(3\theta) \bigg), \\
&\dot{\theta}= 2 \cos(\theta) \sin(\theta) \left( \cos^2(\theta) + 4 \sin(\theta) \right)
+ r \bigg( b \cos^4(\theta) \sin(\theta) + (-2a - b) \cos^2(\theta) \sin^2(\theta) \bigg)\\
&\quad\,\, + r^2 \sin^2(\theta) \sin(2\theta)
+ r^3 \bigg( b \cos^2(\theta) \sin^3(\theta) - 2a \sin^4(\theta) \bigg).
\end{split}
\end{equation}
The equilibrium point $I_3 = (0,0)$ is blown up into the circle $r=0$. The equilibrium points belonging to this circle are $\theta^* \in \left\{ 0, \pi/2, \pi, \pi+\arcsin{(\sqrt{5}-2)}, 3\pi/2, 2\pi-\arcsin{(\sqrt{5}-2)} \right\}$, solutions of the equation $2 \cos(\theta) \sin(\theta) \left( \cos^2(\theta) + 4 \sin(\theta) \right)=0$. Denoting by $J(r,\theta)$ the Jacobian matrix of the vector fields associated to \eqref{eq:bup}, we have that
\begin{eqnarray*}
J(0,0)=J(0,\pi)=\left(\begin{array}{cc}
    -1 & 0 \\
    0 & 2
\end{array}\right),\; J(0,\pi/2)=-J(0,3\pi/2)=\left(\begin{array}{cc}
    0 & 0 \\
    0 & -8
\end{array}\right),\\
J(0,\pi+\theta_0)=J(0,2\pi-\theta_0)=\left(\begin{array}{cc}
    0 & 0 \\
    (152-68\sqrt{5})(2a+5b) & 320-144\sqrt{5}
\end{array}\right),
\end{eqnarray*}
where $\theta_0=\arcsin{(\sqrt{5}-2)}$. From the Jacobian matrices alone, we can conclude that $(0,0)$ and $(0,\pi)$ are hyperbolic saddles, attracting in the radial direction ($r$-direction) and repelling in the angular direction ($\theta$-direction). The other equilibrium points are semi-hyperbolic and require a more detailed analysis.

The semi-hyperbolic equilibrium points can be classified using classical theorems from the literature, for instance \cite[Theorem 2.19]{DLA}. After a translation and a linear change of variables, each semi-hyperbolic equilibrium point can be studied as the origin of systems of the form
\begin{equation*}\label{eq:semihyp}
\dot{r}=R(r,\theta),\quad \dot{\theta}=\lambda\theta+\Theta(r,\theta),
\end{equation*}
with $\lambda\neq 0$ and $\Theta$ having neither constant nor linear terms in its power series expansion. Define the function $\theta=f(r)$ as the solution of $\dot{\theta}=0$ and consider the power series expansion of the function $R(r,f(r))=a_mr^m+O(r^{m+1})$, with $a_m\neq 0$ to determine the behavior of the equilibrium point. Performing this investigation, we obtain that:
\begin{itemize}
\item For $(0,\pi/2)$, it follows that $\lambda=-8$, $a_m = ab/4 > 0$ and $m=5$, which implies that this equilibrium point is a topological saddle repelling in the radial direction and attracting in the angular direction;

\item For $(0,3\pi/2)$, it follows that $\lambda=8$, $a_m = -ab/4 < 0$ and $m=5$, and thus this equilibrium is a topological saddle attracting in the radial direction and repelling in the angular direction;

\item For $(0,\pi+\theta_0)$, it follows that $\lambda=144\sqrt{5}-320>0$, $a_m>0$ and $m=2$. In this case, the equilibrium point is a saddle-node with attractor parabolic sector in the half-plane $r < 0$ and the two hyperbolic sectors in the half-plane $r > 0$;

\item For $(0,2\pi-\theta_0)$, it follows that $\lambda=144\sqrt{5}-320>0$, $a_m<0$ and $m=2$. In this case, the equilibrium is also a saddle-node but with attractor parabolic sector in the half-plane $r > 0$ and the two hyperbolic sectors in the half-plane $r < 0$.

\end{itemize}

\begin{figure}[h]
\begin{center}
\begin{overpic}[width=5.5in]{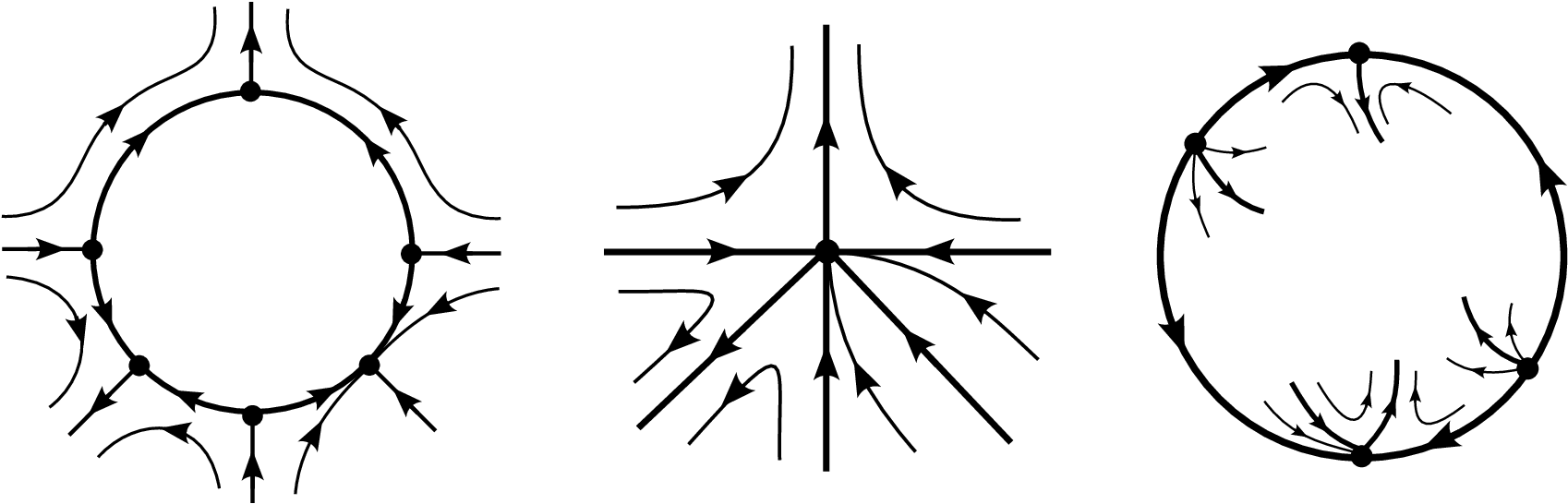}
\put(33,15) {$\theta_1$}
\put(15,33) {$\theta_2$}
\put(-3,15) {$\theta_3$}
\put(2,2) {$\theta_4$}
\put(15,-3) {$\theta_5$}
\put(28,2) {$\theta_6$}
\put(67,14) {$u$}
\put(53.5,30) {$v$}
\put(14,-7) {$(a)$}
\put(51,-7) {$(b)$}
\put(85,-7) {$(c)$}
\put(99,7) {$I_1$}
\put(72.5,23) {$I_2$}
\put(86,30.5) {$I_3$}
\put(86,-0.5) {$I_4$}
\end{overpic}
\end{center} \vspace{0.5cm}
\caption{(a) Desingularization of systems \eqref{u2eq:01} using polar blow-ups where $\theta_1=0$, $\theta_2=\pi/2$, $\theta_3=\pi$, $\theta_4=\pi+\arcsin(\sqrt{5} - 2)$, $\theta_5=3\pi/2$ and $\theta_6=2\pi - \arcsin(\sqrt{5} - 2)$. (b) Topological local phase portraits at the origin of systems \eqref{u2eq:01}. (c) Dynamics near infinity of systems \eqref{eq:02} in the Poincar\'e disk. The equilibrium point $I_4$ has a parabolic sector that intersects the interior of the Poincar\'e disk.}
\label{fig:1}
\end{figure}

A summary of the study of dynamics at $r=0$ is illustrated in Figure \ref{fig:1} (a). Going back through the changes of variables, the dynamics near the circle $r=0$ allow us to classify the infinite equilibrium point at the origin of systems \eqref{u2eq:01}. See Figure \ref{fig:1} (b). It follows that the equilibrium has four hyperbolic sectors and one parabolic sector. The local behaviors of the infinite points studied in this subsection are depicted in Figure \ref{fig:1} (c).

\medskip

\noindent\textbf{Proof of Theorem \ref{thm:01}.} 
Theorem \ref{thm:01} is a consequence of the analysis of the infinite equilibrium points $I_1$, $I_2$, $I_3$, and $I_4$. We emphasize that the equilibrium point $I_4$ has a parabolic sector that intersects the interior of the Poincar\'e disk. See Figure \ref{fig:1} (c). It follows that there are initial conditions $X_0 = (x_0, y_0) \in \mathbb R^2$ such that their orbits $\gamma_{X_0}$ with maximal interval of definition $(t_{X_0}^{-}, t_{X_0}^{+})$ satisfy $\gamma_{X_0} \to I_4$ as $t \to t_{X_0}^{+}$. \hfill $\blacksquare$\vspace{0.4cm}

\subsection{Proof of Theorem \ref{thm:02}}\label{subsec:2}

The following lemma is an adaptation of results proved in \cite{VBFG}. The first quadrant of the plane is defined usually as $Q_1 = \left\{ (x, y) \in \mathbb R^2: x > 0, \,\, y > 0 \right\}$.

\begin{lem}\label{lemma:sub03}
If $(a, b) \in A$ given in \eqref{eq:05} then systems \eqref{eq:02} do not have closed orbits in $Q_1$.
\end{lem}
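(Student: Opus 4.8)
The plan is to rule out periodic orbits in $Q_1$ by constructing a Dulac function and invoking the Bendixson--Dulac criterion. The classical choice for Lengyel--Epstein type systems is a function of the form $B(x,y) = x^{\alpha} y^{\beta} (1+x^2)^{\gamma}$ with exponents to be determined, or more simply a function depending only on $x$. First I would write systems \eqref{eq:02} as $\dot x = P(x,y)$, $\dot y = Q(x,y)$ with $P = (a-x)(1+x^2) - 4xy$ and $Q = bx(1+x^2-y)$, and compute the divergence of $(BP, BQ)$. Note that $\partial Q/\partial y = -bx$, so if $B$ depends only on $x$ the divergence becomes $B(x)\big(\partial P/\partial x - bx\big) + B'(x) P(x,y)$; this still contains $y$ through $P$, so a pure $B(x)$ will not obviously work and one is led to try $B(x,y) = g(x)/y$ or similar, following \cite{VBFG}. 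The key computation is to choose the exponents/form of $B$ so that the resulting divergence has a fixed sign throughout $Q_1$ precisely when $(a,b)\in A$.

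The key steps, in order, are: (i) recall from the equilibrium analysis that the Jacobian of $F$ at $P_a$ has trace $T(a,b)$ and that the curve $b = b_a = a - 3a^{1/3}$ (for $a > 3\sqrt3$) is exactly where some natural sign condition degenerates — this is what singles out the set $A$; (ii) posit a Dulac function $B(x,y)$ (most plausibly $B = 1/(xy)$ up to a power, since the $y$-equation has the factor $x$ and the $(1+x^2-y)$ structure suggests dividing by $y$) and compute $\mathrm{div}(BF) = \partial_x(BP) + \partial_y(BQ)$; (iii) simplify this expression over $Q_1$ and factor it, isolating a polynomial in $x$ (with coefficients depending on $a,b$) whose sign controls everything; (iv) show that this polynomial is negative (or positive) on $x>0$ if and only if $(a,b)$ lies in $A$, using elementary calculus — finding the minimum of the relevant one-variable function and comparing with $0$, which is where the cube-root expression $3a^{1/3}$ arises (it is the location/value of a critical point of a function like $x + $ something); (v) conclude by Bendixson--Dulac that no closed orbit can lie entirely in the simply connected region $Q_1$.

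The main obstacle I expect is step (iii)--(iv): choosing the correct Dulac function so that $y$ (and the denominator $1+x^2$) drops out or factors cleanly, and then proving the resulting sign condition is equivalent to membership in $A$. A closed orbit in $Q_1$ must encircle $P_a$, so it suffices that $\mathrm{div}(BF)$ does not change sign on $Q_1$; the delicate point is that the natural divergence expression will generically change sign for large $a$ unless $b$ is large enough, and pinning down the threshold as exactly $b_a = a - 3a^{1/3}$ requires optimizing a transcendental-looking but elementary function. I would also double-check the boundary case $a \le 3\sqrt3$, where the claim is that no condition on $b$ is needed: here the relevant polynomial should be sign-definite for all $b>0$, which should follow because its "bad" term is dominated. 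A secondary, more mechanical point is that the Bendixson--Dulac criterion requires $B$ to be $C^1$ on $Q_1$, which holds for the proposed $B$ since $x>0$ and $y>0$ there, so no extra care about singularities is needed within the region of interest.
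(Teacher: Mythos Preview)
Your overall strategy (Bendixson--Dulac on $Q_1$, reducing to a one-variable sign condition that pins down the threshold $b_a = a - 3a^{1/3}$) is exactly the paper's approach. However, you take a wrong turn at the very first step: you dismiss a Dulac function depending only on $x$ on the grounds that $B'(x)P(x,y)$ still contains $y$. In fact the $y$-dependence cancels for a specific choice of $B$. The coefficient of $y$ in $B(x)\,\partial_x P + B'(x)\,P$ is $-4B(x) - 4xB'(x) = -4\,(xB(x))'$, which vanishes precisely when $B(x) = c/x$. The paper takes $h(x,y)=1/x$ and obtains
\[
\operatorname{div}\!\big(hF\big) \;=\; a - b - 2x - \frac{a}{x^2} \;=: f(x),
\]
with no $y$ at all. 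From here your steps (iii)--(iv) go through cleanly: for $(a,b)\in A$ one has $b > a - 3a^{1/3}$ (trivially when $a\le 3\sqrt3$), hence $x^2 f(x) < -2x^3 + 3a^{1/3}x^2 - a =: g(x)$, and $g$ has its maximum on $x>0$ at $x=a^{1/3}$ with $g(a^{1/3})=0$. So $f(x)<0$ on $Q_1$ and Bendixson--Dulac finishes.

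By contrast, your suggested $B=1/(xy)$ gives
\[
\operatorname{div}(BF) \;=\; \frac{1}{y}\Big(a - 2x - \frac{a}{x^2}\Big) \;-\; \frac{b(1+x^2)}{y^2},
\]
which retains $y$-dependence and is \emph{not} sign-definite on $Q_1$ once $a\ge 3\sqrt3$ (the first bracket is positive near $x=a/3$), so that route does not close without further work. The fix is simply to go back and check the cancellation for $B=1/x$ before moving to more elaborate candidates.
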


\begin{proof}
Define the positive function $h: Q_1 \longrightarrow \mathbb R$, $h(x, y) = 1/x$, and the vector fields $G(x,y) = h(x, y) F(x, y)$. It follows that the divergence of $G$ is given by $f(x) = a - b - 2 x - a/x^2$. By the Bendixson-Dulac Theorem (see \cite[page 189]{DLA}) the lemma will be proven if we show that $f(x) < 0$, for all $x > 0$ and $(a, b) \in A$. If $b > b_a$ then $x^2 f(x) < g(x) = - 2 x^3 + 3 a^{1/3} x^2 - a$, for all $x > 0$ and $(a, b) \in A$. It is enough to prove that $g(x) \leq 0$, for all $x > 0$ and $a > 0$. The critical points of $g$ are $x = 0$ (local minimum) and $x = a^{1/3}$ (local maximum) with $g(0) = -a < 0$ and $g(a^{1/3}) = 0$. So, it follows that $g(x) \leq 0$ for all $x > 0$ and $(a, b) \in A$.
\end{proof}

\begin{lem}\label{lemma:sub04}
The sets $Q_1$ and the half-plane $x > 0$ are positively invariant by the flows of \eqref{eq:02}.
\end{lem}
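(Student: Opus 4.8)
The plan is to show that the boundary of each region is never crossed in the forward direction by checking the sign of the vector field on the relevant coordinate axes. For the half-plane $x > 0$, the only boundary is the line $x = 0$. Evaluating the first component of $F$ in \eqref{eq:03} on this line gives $\dot{x}\big|_{x=0} = (a - 0)(1 + 0) - 0 = a > 0$, since $a > 0$. Hence the flow points strictly into the region $x > 0$ everywhere on $x = 0$, so no orbit starting with $x > 0$ can reach $x = 0$ in forward time; this proves that $\{x > 0\}$ is positively invariant.

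For $Q_1$, the boundary consists of the two half-axes $\{x = 0,\, y \geq 0\}$ and $\{x > 0,\, y = 0\}$ (together with the corner). On the first piece the computation above already shows $\dot{x} > 0$, so the flow enters $Q_1$. On the second piece, $y = 0$ with $x > 0$, the second component of $F$ is $\dot{y}\big|_{y=0} = b x (1 + x^2 - 0) = b x (1 + x^2) > 0$, again because $b > 0$ and $x > 0$. Thus the flow is strictly inward along $\{y = 0,\, x > 0\}$ as well. Since the vector field points strictly into $Q_1$ along the (smooth parts of the) boundary, an orbit starting in $Q_1$ cannot exit through either axis, and $Q_1$ is positively invariant. (One should note that $F$ is polynomial, hence smooth, so solutions exist and depend continuously on initial data; the corner $(0,0)$ is harmless because an orbit would have to cross one of the open half-axes first, which we have just excluded.)

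There is essentially no obstacle here: the argument is a direct sign check on the defining vector field, and the positivity of the parameters $a$ and $b$ does all the work. The only mild subtlety worth a sentence is making precise that "the flow points inward" forbids exit — this is the standard fact that if $\langle F, n \rangle < 0$ on the boundary (with $n$ the outward normal), or more simply if a boundary component is a piece of $\{x = 0\}$ on which $\dot x > 0$, then no forward orbit from the interior touches that component; this follows because such a touching would force $\dot x \le 0$ at the contact point by a first-exit argument, contradicting $\dot x = a > 0$.
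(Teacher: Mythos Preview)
Your proof is correct and follows essentially the same approach as the paper's own argument: evaluate $F(0,y)=(a,0)$ and note that the second component of $F(x,0)$ equals $bx(1+x^2)>0$ for $x>0$, so the vector field points strictly inward along the boundaries of both regions. Your additional remarks about the corner $(0,0)$ and the first-exit justification only make the reasoning more explicit than the paper's version.
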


\begin{proof}
Evaluate the vector fields $F$ in \eqref{eq:03} on the boundary of $Q_1$. On the one hand, $F(0, y) = (a, 0)$ points east for all $y \in \mathbb R$. On the other hand, the second component of $F(x,0)$ is $b x (1 + x^2) > 0$, for all $x > 0$. So $F(x,0)$ points northeast, north or northwest. So, $Q_1$ is positively invariant by the flows of systems \eqref{eq:02}. As $F(0, y) = (a, 0)$ points east for all $y \in \mathbb R$, it follows that the half-plane $x > 0$ is also positively invariant by the flows of systems \eqref{eq:02}.
\end{proof}

\noindent\textbf{Proof of Theorem \ref{thm:02}.} By linear analysis, it follows that the equilibrium point $P_a \in Q_1$ is (locally) asymptotically stable if $(a, b) \in B$ (see equation \eqref{eq:sub01}). Arbitrarily fix a vector field $F$ in \eqref{eq:03} with $(a, b) \in A$. By Lemma \ref{lemma:sub04}, the half-plane $x > 0$ is positively invariant by the flow of $F$. Take an initial condition $X_1 = (x_1, y_1)$ in the half-plane $x > 0$ and consider its positive orbit $\gamma_{X_1}^{+}$ defined in the maximal interval $[0, t_{X_1}^{+})$. By the analysis performed in the proof of Theorem \ref{thm:01}, the equilibrium points at infinite are not the $\omega$-limit set of $\gamma_{X_1}^{+}$. So, there is a compact set $K$ contained in the half-plane $x > 0$ such that $\gamma_{X_1}^{+}$ enters and remains in $K$. This implies that $t_{X_1}^{+} = \infty$. Thus the $\omega$-limit set of $\gamma_{X_1}^{+}$ is nonempty and is also contained in $K$. The unique equilibrium point of $F$ is $P_a \in Q_1$ which is (locally) asymptotically stable. By Lemma \ref{lemma:sub04}, $Q_1$ is positively invariant by the flow of $F$ and, by Lemma \ref{lemma:sub03}, $F$ does not have closed orbits in $Q_1$. Putting all these statements together, and by using the Poincar\'e-Bendixson Theorem, the $\omega$-limit set of $\gamma_{X_1}^{+}$ is $\{P_a\}$. In short, Theorem \ref{thm:02} is proved.\hfill $\blacksquare$\vspace{0.4cm}

\section{Hopf bifurcations and limit cycles}\label{sec:3}

Define the curve $H = \{ (a, b): b = b_H, \, a > 5 \sqrt{5/3} \}$ (see equation \eqref{eq:sub01} and Figure \ref{fig:2} (a)). From the first statement of Theorem \ref{thm:02} and linear analysis it follows that $P_a$ is hyperbolic and unstable for each pair of admissible parameters on the complement of the set $B \cup H$. For $(a, b) \in H$, $P_a$ is a Hopf point. Following the approach given in \cite[page 292]{HW} for the calculation of the first Lyapunov coefficient $L_1$, it follows that $L_1 (a) = m(a) \, (2 a^4-675 a^2-3125)$, where $m = m(a)$ is a positive function of the parameter $a$.

Consider $a_B = 5 \sqrt{27+\sqrt{769}}/2$, $B_a = \left( a_B, b_H(a_B) \right)$. Thus, $L_1(a) < 0$ for $a \in \left( 5 \sqrt{5/3}, a_B \right)$, $L_1(a_B) = 0$, and $L_1 (a) > 0$ for $a > a_B$. Computing the second Lyapunov coefficient, we obtain $L_2 (a_B) < 0$. The point $B_a \in H$ corresponds to a Bautin point associated to a degenerate Hopf bifurcation. Based on the above statements, we conclude: $P_a$ is asymptotically stable for $(a,b)$ on the arc $H_{-} \subset H$ corresponding to $a \in \left( 5 \sqrt{5/3}, a_B \right]$ and it is unstable for $(a,b)$ on the arc $H_{+} \subset H$ corresponding to $a > a_B$. See Figure \ref{fig:2} (a). From the classical Hopf Bifurcation Theorem, the phase portrait of \eqref{eq:02} has a stable limit cycle surrounding the unstable equilibrium $P_a$ if we take parameters close to but below the arc $H_{-}\setminus \{B_a\}$. On the other hand, the phase portrait of \eqref{eq:02} has an unstable limit cycle surrounding the stable equilibrium $P_a$, if we take parameters close to but above the arc $H_{+}$.

\begin{figure}[H]
\centerline{
\fbox{\includegraphics[width=16.5cm]{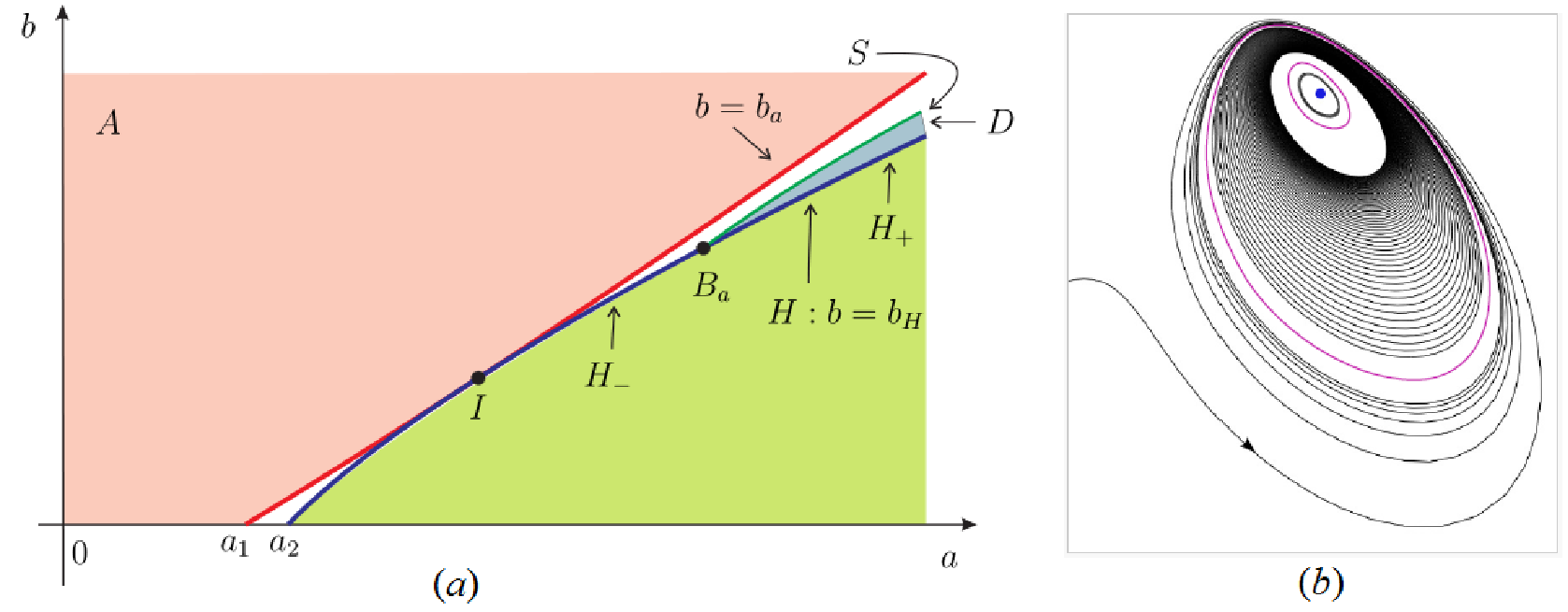}}}
\caption{(a) The set of admissible parameters: $A$ is defined in \eqref{eq:05}, $H$ is the Hopf curve, $a_1 = 3 \sqrt{3}$, $a_2 = 5 \sqrt{5/3}$, $I = (5 \sqrt{5}, 2 \sqrt{5})$ is the intersection point of the graphs $b = b_H$ (blue) and $b = b_a$ (red). (b) Phase portrait of \eqref{eq:02} for $(a, b) = (24.712, 13.85) \in D$.}
\label{fig:2}
\end{figure}

In the set of admissible parameters there is a curve $S$ with endpoint at $B_a$ corresponding to parameters for which the phase portrait of \eqref{eq:02} has a semistable limit cycle surrounding the stable equilibrium $P_a$. There is also a region $D$ bounded by the curves $S$ and $H_{+}$ corresponding to parameters for which the phase portrait of \eqref{eq:02} has two nested limit cycles surrounding the stable equilibrium $P_a$: the smaller one that is unstable and the larger one that is stable. See Figure \ref{fig:2} (a) for an illustration of $D$ and Figure \ref{fig:2} (b) for an illustration of the phase portrait of \eqref{eq:02} with two limit cycles.

\section*{Acknowledgments}

\noindent The first author is supported by Funda\c c\~ao de Amparo \`a Pesquisa do Estado de S\~ao Paulo (grant number 2024/06926-7). The second is partially supported by Funda\c c\~ao de Amparo \`a Pesquisa do Estado de Minas Gerais (grant numbers APQ-02153-23 and RED-00133-21), by Funda\c c\~ao de Amparo \`a Pesquisa do Estado de S\~ao Paulo (grant number 2019/07316–0) and by Conselho Nacional de Desenvolvimento Cient\'ifico e Tecnol\'ogico (grant number 311921/2020-5). The third author is grateful for the warm hospitality of the Universidade Federal de Itajub\'a during the development of this work.

\section*{Conflict of interest}

\noindent The authors declare that they have no conflict of interest.

\section*{Data availability}

\noindent Data sharing is not applicable to this article as no new data were created or analyzed in this study.

\end{document}